\documentclass[11pt,a4paper]{amsart}
\usepackage[T1]{fontenc}
\usepackage{lmodern}
\usepackage[margin=29mm]{geometry}
\usepackage{amsmath,amssymb,mathtools}
\usepackage{dsfont}
\usepackage{mathrsfs}
\usepackage{microtype}
\usepackage{xcolor}
\usepackage[colorlinks=true,linkcolor=blue!45!black,citecolor=blue!45!black,urlcolor=blue!45!black]{hyperref}
\hypersetup{pdftitle={A minimal type of Morley rank omega in a partial differential field},pdfsubject={A Burgers-equation type with U-rank one and Morley rank omega},pdfkeywords={DCF02, minimal types, Morley rank, Burgers equation}}
\usepackage{enumitem}
\setlist[enumerate]{label=\textup{(\roman*)},leftmargin=*,itemsep=3pt}
\allowdisplaybreaks[1]
\newtheorem{theorem}{Theorem}[section]
\newtheorem{proposition}[theorem]{Proposition}
\newtheorem{lemma}[theorem]{Lemma}
\newtheorem{corollary}[theorem]{Corollary}
\theoremstyle{definition}

\newtheorem{remark}[theorem]{Remark}
\newtheorem{question}[theorem]{Question}

\newcommand{\DCF}{\operatorname{DCF}}
\newcommand{\RM}{\operatorname{RM}}
\newcommand{\MD}{\operatorname{MD}}
\newcommand{\acl}{\operatorname{acl}}

\newcommand{\tp}{\operatorname{tp}}
\newcommand{\trdeg}{\operatorname{trdeg}}
\newcommand{\ord}{\operatorname{ord}}

\newcommand{\Ucal}{{\mathds{U}}}
\newcommand{\C}{\mathcal C}

\newcommand{\Uu}{{\mathds{U}}}

\newcommand{\dd}{\mathrm d}

\makeatletter
\def\moverlay{\mathpalette\mov@rlay}
\def\mov@rlay#1#2{\leavevmode\vtop{%
   \baselineskip\z@skip \lineskiplimit-\maxdimen
   \ialign{\hfil$\m@th#1##$\hfil\cr#2\crcr}}}
\newcommand{\charfusion}[3][\mathord]{
    #1{\ifx#1\mathop\vphantom{#2}\fi
        \mathpalette\mov@rlay{#2\cr#3}
      }
    \ifx#1\mathop\expandafter\displaylimits\fi}
\makeatother

\def\Ind#1#2{#1\setbox0=\hbox{$#1x$}\kern\wd0\hbox to
  0pt{\hss$#1\mid$\hss} \lower.9\ht0\hbox to
  0pt{\hss$#1\smile$\hss}\kern\wd0}
\def\Notind#1#2{#1\setbox0=\hbox{$#1x$}\kern\wd0\hbox to
  0pt{\mathchardef\nn="0236\hss$#1\nn$\kern1.4\wd0\hss}\hbox to
  0pt{\hss$#1\mid$\hss}\lower.9\ht0 \hbox to
  0pt{\hss$#1\smile$\hss}\kern\wd0}
\def\ind{\mathop{\mathpalette\Ind{}}}

\title[A minimal type of Morley rank $\omega$]{A minimal type of Morley rank $\omega$\texorpdfstring{\\}{ }in a partial differential field}
\author[P. KOWALSKI]{Piotr Kowalski}
\thanks{
 Supported by the Narodowe Centrum Nauki grant no. 2021/43/B/ST1/00405 and by the T\"{u}bitak grant no. 1001-124F359.}
\address{$^{\diamondsuit}$Instytut Matematyczny\\
Uniwersytet Wroc{\l}awski\\
Wroc{\l}aw\\
Poland}
\email{pkowa@math.uni.wroc.pl} \urladdr{http://www.math.uni.wroc.pl/\textasciitilde pkowa/ }
\subjclass[2020]{03C60, 12H05, 35F20}
\keywords{Differentially closed fields, minimal types, Lascar rank, Morley rank, Burgers equation, differential operators}

\begin{document}
\begin{abstract}
We consider the Kolchin-generic complete $1$-type of the inviscid Burgers equation
$Dx=x\partial x$ in a differentially closed field of characteristic zero
with two commuting derivations. We prove that this type has Lascar rank
$1$ and Morley rank $\omega$.
\end{abstract}
\maketitle

\section{Introduction and main result}

Model-theoretic ranks provide notions of dimension for definable sets
and complete types. In differentially closed fields, these ranks can
also be compared with differential-algebraic dimensions. The Lascar
rank of a complete type measures the complexity of its forking
extensions, whereas its Morley rank is the minimum of the Morley
ranks of its ``definable neighbourhoods''. These ranks can therefore
behave differently when a type avoids exceptional definable sets
that nevertheless contribute to the rank of every neighbourhood.

We are interested in the differential case, where rank discrepancies
were established by Hrushovski and Scanlon~\cite{HS}: Lascar rank and
Morley rank need not coincide, even on definable sets of finite
Morley rank in $\DCF_0$. With one derivation, however, finite
$U$-rank, finite Morley rank, and finite-dimensionality are
equivalent; see~\cite{Poizat}. Here a type is finite-dimensional
if the differential field generated by a realization has finite
transcendence degree over the differential base field.

For several commuting derivations, the situation is different.
McGrail~\cite{McGrail} claimed the inequalities
\[
 \omega^\tau\cdot a_\tau
 \leqslant U(q)
 \leqslant \RM(q)
 <\omega^\tau\cdot(a_\tau+1),
\]
where $\tau$ is the degree of the Kolchin polynomial of $q$ and
$a_\tau$ is its leading coefficient when written in the binomial
basis. These inequalities would imply the same equivalence of
the three finiteness conditions as in the ordinary differential
case. However, S\"uer~\cite{Suer} showed that the proposed lower
bound for $U$-rank fails. For the set of solutions of the \emph{heat equation}
\[
 H=\{x\mid \partial_1x=\partial_2^2x\}
\]
in $\DCF_{0,2}$, the Kolchin polynomial of a generic solution is
$2t+1$, but $U(H)=\omega$, rather than being at least
$\omega\cdot2$. This example disproves the lower bound, but does
not settle the asserted equivalence of the finiteness conditions.

The present paper was motivated by ongoing joint work with James
Freitag and Omar Le\'on S\'anchez on these questions; see also
the author's slides from a recent workshop in Chania  ~\cite{Chania}. In~\cite{LS}, Le\'on S\'anchez
asked whether the equation
\[
 \partial_1(x)=x^3+c,
 \qquad c\text{ differentially generic},
\]
defines a set of finite $U$-rank in $\DCF_{0,2}$. In our joint work,
we have shown that this set, and more general families, are
strongly minimal in $\DCF_{0,2}$, despite the infinite
transcendence degree of the differential field generated by a
generic solution. These examples therefore separate finiteness
of model-theoretic rank from finite-dimensionality, but their
$U$-rank and Morley rank are both $1$. This leaves the question
whether finite $U$-rank can coexist with infinite Morley rank. We answer this
question by considering the Kolchin-generic type of the inviscid
Burgers equation.

The \emph{inviscid Burgers equation} $u_t+u\,u_x=0$ belongs to the classical theory of nonlinear wave
propagation originating from Riemann's study of
finite-amplitude plane waves~\cite{Riemann1860}. Its modern name
reflects Burgers's investigation of the \emph{viscous equation}
$u_t+u\,u_x=\nu u_{xx}$ as a simplified model of turbulence,
notably in \cite{Burgers1948}, although this viscous
equation had already appeared in Bateman's work in
~\cite{Bateman1915}. Hopf and Cole developed
its reduction to the heat equation, now known as the Cole--Hopf
transformation, and investigated its solutions in ~\cite{Hopf1950,Cole1951}. The inviscid equation is
obtained by setting $\nu=0$.

We describe now our main result. Let us fix a sufficiently
saturated model
\[
 (\Ucal;D,\partial)\models\DCF_{0,2},
 \qquad D\partial=\partial D,
\]
and write
\[
 \Delta=\{D,\partial\},
 \qquad
 \C=\ker D\cap\ker\partial
\]
for the set of derivations and the field of total constants,
respectively. Choose $s,t\in\Ucal$ such that
\[
 Ds=0,\qquad \partial s=1,\qquad
 Dt=1,\qquad \partial t=0,
\]
and fix a small algebraically closed differential subfield
$M\subseteq\Ucal$ containing $s$ and $t$. Such $s$ and $t$ exist by
differential closedness. These parameters will be used in the
Morley-rank calculation.

Consider the solution set of the inviscid Burgers equation (up to reversing the time
variable)
\begin{equation}\label{burgers}
 X=\{x\in\Ucal: Dx=x\partial x\}.
 \notag
\end{equation}
We show in Lemma~\ref{irred} that $X$ is absolutely
Kolchin irreducible, so its Kolchin-generic type over $M$ is
well defined (see its description above the statement of Theorem \ref{independence}).

\begin{theorem}\label{main}
Let $p\in S_1(M)$ be the Kolchin-generic type of $X$. Equivalently,
$a\models p$ if and only if
\[
 Da=a\partial a
 \quad\text{and}\quad
 a,\partial a,\partial^2a,\ldots
 \text{ are algebraically independent over }M.
\]
Then
\[
 U(p)=1,\qquad \RM(p)=\omega, \qquad  \RM(X)=\omega,\qquad \MD(X)=1,\qquad U(X)=\omega.
\]
Moreover, the type $p$ is strictly disintegrated and hence $p$ is geometrically trivial.
\end{theorem}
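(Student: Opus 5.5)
The plan is to exploit the method of characteristics. For $a\models p$ the Burgers equation says that $a$ is constant along the vector field $D-a\partial$, and the characteristic coordinate $\xi=s+ta$ satisfies the same equation. Indeed $D\xi=a+tDa=a+ta\partial a$ and $\partial\xi=1+t\partial a$, so $(D-a\partial)\xi=0$. Hence, heuristically, $a=F(\xi)$ with $\xi=s+tF(\xi)$ for an ``arbitrary function'' $F$.

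\textbf{Step 1 ($U(p)=1$).} We show that every forking extension of $p$ is algebraic. Let $N\supseteq M$ and $a\models p$ with $a\not\ind_M N$. Then some $\partial^n a$ is algebraic over $M\langle N\rangle(a,\dots,\partial^{n-1}a)$, i.e.\ $a$ satisfies a nontrivial equation $f(a,\partial a,\dots,\partial^n a)=0$ over $N$ of minimal order $n$. Since $Da$ is determined by the $\partial$-jets, we can differentiate $f$ by $D$ and use $D\partial^k a=\partial^k(a\partial a)$. This produces a relation of order $n+1$ in which $\partial^{n+1}a$ appears linearly, with coefficient $a\,\partial f/\partial(\partial^n a)$. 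Reducing it modulo $f$ and using minimality, we obtain the condition that $(D-a\partial)$ applied to $f$ (suitably prolonged) vanishes on the locus $f=0$. The key claim is that this forces $n=0$, so that $a\in\acl(N)$. We would argue by weights: the term $\partial^{n+1}a$ comes with coefficient $a f_{\partial^n a}$ from $D$ and $-a f_{\partial^n a}$ from $a\partial$, so these cancel. The next-order term is $(n+1)\partial a\,\partial^n a\, f_{\partial^n a}$, which is not divisible by $f$ unless $f$ has an extremely special form. A cleaner route is the paper's independence theorem (Theorem~\ref{independence}), which presumably states that Kolchin-generic solutions are either independent or interalgebraic. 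I would invoke it to get that any nonalgebraic relation of $a$ to $N$ is already algebraic. The same relation computation, applied to several realizations $a_1,\dots,a_k$, should give that pairwise independent realizations are independent. Strict disintegration follows, and hence triviality.

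\textbf{Step 2 ($\RM(X)=\omega$ and $\RM(p)\ge\omega$).} For the lower bound we use $s,t$ to build definable subsets of $X$. For each $n$, consider $X_n=\{x\in X: \partial^{n}x=0\}$, or better a family from characteristics: the solutions with $F$ a polynomial of degree $<n$ in $\xi$, which is $\{x: (\partial/(1-t\partial x))^n x=0\}$ up to clearing denominators. Each such set is finite-dimensional, with generic type of transcendence degree $n$ over $M$ (the coefficients of $F$ become $D$-$\partial$ constants). By Step 1's relation-analysis restricted to these sets, together with the classical one-derivation situation, their ranks go to infinity with $n$. Every formula in $p$ contains $X\setminus Y$ for some proper Kolchin-closed $Y\subsetneq X$, and each $X_n$ meets $X\setminus Y$ in a set of rank $\ge n$ (generic of $X_n$ avoids $Y$ for large $n$). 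Hence $\RM(p)\ge\omega$. For the upper bound, every proper Kolchin-closed subset of $X$ has finite $\partial$-order, hence is finite-dimensional and has finite Morley rank. So $\RM(X)\le\omega$, with Morley degree $1$ by irreducibility (Lemma~\ref{irred}). $U(X)=\omega$ follows since the $X_n$ have finite $U$-ranks tending to infinity, while $U(p)=1$.

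The main obstacle is Step 1, namely showing that no nonalgebraic forking extension exists: a generic solution has infinite transcendence degree, so rank~$1$ must come from the rigidity computation above, and I expect most of the work to lie there.
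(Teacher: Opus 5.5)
\textbf{The gap is in Step 1, which is the core of the theorem.} Your direct compatibility computation cannot force $n=0$ as claimed. The condition that $(D-a\partial)f$ vanish on $\{f=0\}$ is derived using only $a\in X$ and the minimality of $f$, never the genericity of $a$ over $M$. So if it forced $n=0$, no point of $X$ could have finite positive transcendence degree over $N$, and that is false. For $c_0\in\C\cap N$ and $c_1\in\C$ generic over $N$, the element $x=(s-c_0)/(c_1-t)$ lies in $X$, is transcendental over $N$, and satisfies the order-one relation $f=\partial x-x/(s-c_0)$. Indeed $(D-x\partial)f=(\partial x)^2-x^2/(s-c_0)^2=f\cdot\bigl(\partial x+x/(s-c_0)\bigr)$. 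The nonalgebraic points of the paper's $Y_n$ behave the same way. So $M$-genericity must enter, and you do not say how.

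Falling back on Theorem~\ref{independence} does not close the gap either, for two reasons.
\begin{enumerate}
\item You guess its content wrongly. It says that pairwise \emph{distinct} solutions, each $\partial$-transcendental over $K$, have jointly algebraically independent $\partial$-jets, so interalgebraicity never occurs. This stronger form is exactly what strict disintegration needs: ``pairwise independent realizations are independent'' gives only triviality. Proving it is the real work. In $\Omega_{F/K}$, the operator $T=D_\Omega-a_1\partial_\Omega$ preserves each $E_1^{\leqslant m}$ but raises $\ord_i$ by one on $E_i$ for $i\neq1$, because $a_i\neq a_1$. This lets one shorten any minimal relation $v_1+\cdots+v_r=0$.
\item The theorem concerns only realizations of $p$ over $M$, so by itself it says nothing about $\tp(a/N)$ for arbitrary $N$. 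The missing bridge is Lemma~\ref{indiscernible-minimal}. Independence of jets plus quantifier elimination makes any two tuples of distinct realizations have the same type over $M$. Stability then forces every externally definable subset of $p(\Ucal)$ to be finite or cofinite, since otherwise the splitting formula would shatter arbitrarily large sets. Hence $p$ has a unique nonalgebraic extension to any $N$, which must be the nonforking one.
\end{enumerate}

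\textbf{Step 2 has the right architecture, but the lower bound is only asserted.} Your characteristic family $x=F(s+tx)$ with $F\in\C[T]$ is essentially the paper's $Y_n$, given by $s+tx=P(x)$, i.e.\ $F=P^{-1}$. Your upper bound matches Lemma~\ref{proper-finite} and Proposition~\ref{upper-bound}. However, transcendence degree bounds rank only from above (Lemma~\ref{finite-dimensional-bound}). Invoking ``Step 1's relation-analysis together with the one-derivation situation'' is not an argument that $\RM$ or $U$ of your sets tends to infinity.

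What is needed is that the coefficient map $x\mapsto\bar c$ is a definable surjection with finite fibres onto a definable subset of some $\C^k$, with $k$ growing with $n$. It is well defined because $s+tx\notin\C$ (respectively $x\notin\C$) is transcendental over the algebraically closed field $\C$. Combine this with the fact that $\C$ is stably embedded as a pure algebraically closed field. Then $U=\RM$ on these sets equals that dimension, as in Proposition~\ref{exceptional-rank}. This is what gives both $\RM(X)\geqslant\omega$ and $U(X)=\omega$.

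Two smaller points. First, the characteristic derivative is $\partial/(1+t\partial x)$, not $\partial/(1-t\partial x)$. Second, $\MD(X)=1$ does not follow from Kolchin irreducibility alone. It follows from the dichotomy that every definable subset of $X$ has finite rank or has complement of finite rank, and that dichotomy also uses the finite rank of proper Kolchin-closed subsets of $X$.
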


The argument has two parts. The minimality argument rests on a
differential-algebraic independence statement: distinct solutions
that are individually $\partial$-differentially transcendental over
a base field have jointly algebraically independent
$\partial$-jets. Its proof uses standard properties of K\"ahler
differentials and the operators induced by $D$ and $\partial$.
Quantifier elimination then shows that any two ordered tuples
of distinct realizations of $p$, of the same length, have the
same type over $M$. Stability gives minimality.

For the Morley-rank calculation, every proper Kolchin-closed
subset of $X$ is finite-dimensional, while $X$ contains
finite-dimensional definable sets $Y_n$ of rank $n+1$ for every
$n\geqslant1$. These sets are given by the following polynomial
relations
\[
 s+tx=\sum_{i=0}^{n}c_i x^i,
 \qquad c_i\in\C,\quad c_n\neq0.
\]
The rank-one assertion concerns the \emph{complete type}.
The whole solution set satisfies
\[
 U(X)=\RM(X)=\omega,
\]
since the sets $Y_n$ also have $U$-rank $n+1$. In particular,
$p$ is nonisolated. The distinction between ranks of complete
types and ranks of definable sets leads to the question discussed
in Section~\ref{sec3}: whether finite $U$-rank and finite Morley rank are
equivalent for definable sets in $\DCF_{0,m}$.

The paper is organized as follows. In Section~\ref{sec2}, we recall the
necessary model-theoretic and differential-algebraic preliminaries,
describe the generic type of $X$, and prove the independence
theorem. In Section~\ref{sec3}, we compute the ranks and complete the proof
of Theorem~\ref{main}. We finish Section~\ref{sec3} with some further observations and
a question concerning definable sets.

\medskip
\noindent
\textbf{Acknowledgements and use of AI.}
The author thanks James Freitag and Omar Le\'on S\'anchez for their
collaboration on the work which motivated this paper as well as Sonat S\"{u}er for helpful discussions.

The author used ChatGPT (GPT-6 Astra in Pro mode) to assist in identifying examples, developing proofs, and drafting portions of the text. The author subsequently refined and streamlined the arguments and assumes full responsibility for the final content.

\section{Preliminaries and the independence theorem}\label{sec2}

In this section we provide model-theoretic and algebraic preliminaries. We start with two general stability results and then continue towards our partial differential context.

Let $T$ be a complete stable theory, $\Uu$ be a
sufficiently saturated monster model, $A\subset B\subset \Uu$ be small, and $p\in S_1(A)$
be a nonalgebraic stationary type. We write $a\ind_A B$ for nonforking independence over $A$ and set 
\[
P:=p(\Uu)\subset \Uu.
\]
We start with the standard minimality characterization (see e.g.~\cite[Remark 5.6]{Pillay}).
\begin{lemma}\label{indiscernible-minimal}
If any two finite ordered tuples (of the same length)
of distinct realizations of $p$ have the same type
over $A$, then $U(p)=1$.
\end{lemma}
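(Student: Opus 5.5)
We argue by contradiction. Since $p$ is nonalgebraic, $U(p)\geqslant 1$, so it suffices to show that $p$ has no forking extension of $U$-rank $\geqslant1$. Suppose that $U(p)\geqslant 2$. Then there are a small set $B\supseteq A$ and a realization $a\models p$ such that $a\not\ind_A B$ and $\tp(a/B)$ is still nonalgebraic. Extending $B$ if necessary, we may assume that $B$ is a model; this keeps $a\not\ind_A B$ and, after a standard adjustment using saturation, keeps $\tp(a/B)$ nonalgebraic. Since $B$ is a model, $\tp(a/B)$ is definable over $B$ and is finitely satisfiable in $B$. Put $q:=\tp(a/B)$.

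The key step is to produce a $B$-indiscernible sequence of realizations of $q$ whose behaviour contradicts the hypothesis. Take a Morley sequence $(a_i)_{i<\omega}$ of $q$ over $B$, that is, $a_i\models q|_{B a_{<i}}$. Each $a_i$ realizes $p$, and the $a_i$ are pairwise distinct because $q$ is nonalgebraic. The sequence is independent over $B$, but since each $a_i$ forks with $B$ over $A$, we expect the sequence to fail to be independent over $A$. We then compare two sequences. Let $(b_i)_{i<\omega}$ be a Morley sequence of the stationary type $p$ over $A$. By the hypothesis, for every $n$ we have $\tp(a_0,\dots,a_{n-1}/A)=\tp(b_0,\dots,b_{n-1}/A)$, since both are tuples of distinct realizations of $p$. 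Consequently $(a_i)$ is itself a Morley sequence of $p$ over $A$, and in particular $a_n\ind_A a_{<n}$ for every $n$.

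The contradiction then comes from the definability of forking in the stable setting. Because $a\not\ind_A B$, there is a formula $\varphi(x,b)\in q$ with $b\in B$ such that $\varphi(x,b)$ forks over $A$. By a standard fact, a formula forking over $A$ cannot be realized by infinitely many elements of a Morley sequence of $p$ over $A$ in a way that keeps the parameter independent; more concretely, if $(a_i)$ is an $A$-indiscernible Morley sequence of $p$ all of whose members satisfy $\varphi(x,b)$, then $\{\varphi(x,b)\}$ is consistent with the average type of the sequence. That average type is the nonforking extension $p|_{Ab}$. Hence $\varphi(x,b)\in p|_{Ab}$, contradicting the fact that $\varphi(x,b)$ forks over $A$. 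I expect this to be the main obstacle: justifying that the average type over $Ab$ of an indiscernible sequence which is Morley over $A$ is the nonforking extension. We use stationarity of $p$ here. The sequence $(a_i)_{i\geqslant N}$ is eventually independent from $b$ over $A$, because the sequence $(a_i)$ is $A$-indiscernible and forking of a single $b$ can involve only a finite part of the sequence (finite character together with local rank/weight in stable theories). Taking $a_i$ with $a_i\ind_A b$ and $\models\varphi(a_i,b)$ gives $\varphi(x,b)\in\tp(a_i/Ab)=p|_{Ab}$, which yields the contradiction.

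Alternatively, and more cleanly, one may cite~\cite[Remark 5.6]{Pillay}: in a stable theory, a stationary type whose realizations form an indiscernible set over $A$ (which is exactly our hypothesis) is minimal. We would present the argument above as an unwinding of that remark, with the essential inputs being stationarity, Morley sequences, and the fact that a forking formula cannot contain infinitely many elements of a $p$-Morley sequence, because $p$ has finite weight along indiscernibles.
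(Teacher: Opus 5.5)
Your route differs from the paper's, and its skeleton is sound. You take a nonalgebraic forking extension $q=\tp(a/B)$ and pick $\varphi(x,b)\in q$ forking over $A$. The hypothesis then makes any family of distinct realizations of $q$ an $A$-independent set of realizations of $p$, and you aim to find one member independent from $b$ over $A$. (Making $B$ a model and taking a Morley sequence of $q$ over $B$ is superfluous; any distinct realizations of $q$ will do.)

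The gap is at exactly the step you flag. Every $a_i$ satisfies $\varphi(x,b)$, so every $a_i$ forks with $b$ over $A$. Your claim that some tail $(a_i)_{i\geqslant N}$ is independent from $b$ over $A$ is therefore the whole content of the contradiction. Your justification for it is that a single $b$ forks with only a finite part of an independent sequence, by finite weight. That is false for a general stable $T$, which is the generality of the lemma. It holds under superstability, where local character gives a finite $I_0$ with $b\ind_{AI_0}I$, hence $b\ind_A I\setminus I_0$; so it would cover the application to the $\omega$-stable theory $\DCF_{0,2}$. In a strictly stable theory it fails for $\omega$-sequences. Take countably many independent equivalence relations $E_n$, a model $M$, its generic $1$-type and a Morley sequence $(a_n)_{n<\omega}$ in it. An element $b$ with $b\,E_n\,a_n$ for every $n$ forks over $M$ with each $a_n$, so no tail is independent from $b$.

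The repair is to use a longer family. Take $|T|^+$ distinct realizations of $q$; they form an $A$-independent set $I$. Local character gives $I_0\subseteq I$ with $|I_0|\leqslant|T|$ and $b\ind_{AI_0}I$. Transitivity with $I_0\ind_A I\setminus I_0$ then gives $a'\ind_A b$ for every $a'\in I\setminus I_0$, which is impossible since $\models\varphi(a',b)$. Alternatively, prolong your $\omega$-sequence $I$ by an infinite Morley sequence $J$ in the nonforking extension of $p$ to $AIb$. By the hypothesis, $I+J$ is an $A$-indiscernible sequence of distinct realizations of $p$. Every element of $I$ satisfies $\varphi(x,b)$ and no element of $J$ does, which contradicts the finite/cofinite dichotomy for indiscernible sequences in stable theories. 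This is the standard proof that the average type over $Ab$ is the nonforking extension. Note also that citing \cite[Remark 5.6]{Pillay} cites the statement itself rather than proving it.

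The paper's proof avoids forking calculus altogether. The hypothesis supplies automorphisms over $A$ showing that a formula cutting $P$ into two infinite pieces would have the independence property. Hence every externally definable subset of $P$ is finite or cofinite, so $p$ has a unique nonalgebraic extension over any set, and that extension must be the nonforking one. This uses only the NIP consequence of stability and yields the stronger finite/cofinite statement. Your repaired argument instead works directly with a forking formula via local character of independence.
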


\begin{proof}
Suppose a formula $\varphi(x;b)$ cuts $P$ into two
infinite pieces. Fix distinct $a_1,\ldots,a_n\in P$. For each
$I\subseteq\{1,\ldots,n\}$, choose distinct $a'_1,\ldots,a'_n\in P$
with $\varphi(a'_i;b)$ holding exactly when $i\in I$. By the hypothesis, there is $\sigma_I\in \operatorname{Aut}(\Uu/A)$
 sending $(a'_1,\ldots,a'_n)$ to
$(a_1,\ldots,a_n)$. Applying it to $b$, we obtain
\[
\varphi(a_i;\sigma_I(b))
\iff \varphi(a'_i;b)
\iff i\in I.
\]
Hence, for our fixed formula $\varphi$, we have
\[
\forall n\geqslant1\ \exists a_1,\ldots,a_n\
\forall I\subseteq\{1,\ldots,n\}\ \exists b_I\
\forall i\leqslant n\
\bigl(\varphi(a_i;b_I)\iff i\in I\bigr),
\]
which contradicts stability.

Thus every externally definable subset of $P$ is finite or cofinite.
Therefore
every nonalgebraic extension of $p$ must choose the cofinite side of
every formula. Such an extension is unique over every larger parameter
set. By stationarity it is the nonforking extension, proving $U(p)=1$.
\end{proof}

\begin{remark}\label{PDCT}
Assume in addition that $U(p)=1$. We collect here some well-known facts regarding geometric triviality.
\begin{enumerate}
\item[(P)]
\emph{Pure induced structure over $A$.}
For every $n\geqslant 1$, the restriction to $P^n$ of every $A$-definable
relation is definable without parameters in the language of equality.
Equivalently, every such restriction is a Boolean combination of
equalities $x_i=x_j$.

Another equivalent formulation is that for all $n$ all ordered $n$-tuples of pairwise distinct elements of
$P$ have the same type over $A$.

\item[(D)]
\emph{Strict disintegration over $A$.}
Every finite tuple $(a_1,\ldots,a_n)$ of pairwise distinct realizations of $p$ is
independent over $A$:
\[
a_i\ind_A a_1\ldots a_{i-1}
\qquad (1\leqslant i\leqslant n).
\]

\item[(C)]
\emph{The algebraic-closure condition.} For every $B\subseteq P$, we have the following
\[
\operatorname{acl}(A\cup B)\cap P=B.
\]
By finite character of algebraic closure, it suffices to require this
for finite $B$.

\item[(T)] For every $B\subseteq P$, let us write
\[
\operatorname{cl}_p(B):=\operatorname{acl}(A\cup B)\cap P
\qquad(B\subseteq P).
\]
The type $p$ is \emph{geometrically trivial}, or \emph{disintegrated},
over $A$ if for every $B\subseteq P$, we have
\[
\operatorname{cl}_p(B)
=
\bigcup_{b\in B}\operatorname{cl}_p(\{b\}).
\]
\end{enumerate}
Then, we have the following:
\[
\boxed{\textnormal{(P)}
\ \Longleftrightarrow\
\textnormal{(D)}
\ \Longleftrightarrow\
\textnormal{(C)}\ \Longrightarrow\
\textnormal{(T)}.}
\]
Indeed, stationarity implies that all independent ordered
$n$-tuples of realizations of $p$ have the same type over $A$,
so \textnormal{(D)} implies \textnormal{(P)}. Conversely,
comparing any tuple of distinct realizations with a Morley
sequence in $p$ gives \textnormal{(P)} implies
\textnormal{(D)}. Since $U(p)=1$, an extension of $p$ forks
over $A$ if and only if it is algebraic. This gives the
equivalence of \textnormal{(D)} and \textnormal{(C)}.
Finally, \textnormal{(C)} immediately implies
\textnormal{(T)}.
We can describe more precisely:
\[
p\text{ is strictly disintegrated}
\quad\Longleftrightarrow\quad
p\text{ is disintegrated $\ $and $\ \left(\forall a\in P\right)\left(\operatorname{cl}_p(\{a\})=\{a\}\right)$}
.
\]
\end{remark}

We move now to our case of interest: the theory $\DCF_{0,2}$. We have the set-up from the introduction, that is $(\Ucal;D,\partial)$ is a monster model and $X$ is defined by the inviscid Burgers equation.
We use the standard facts that $\DCF_{0,2}$ is $\omega$-stable and has
quantifier elimination, that algebraic closure is field-theoretic
algebraic closure of the generated differential field, and that types
over algebraically closed differential fields are stationary.
Definable sets are Boolean combinations of Kolchin-closed sets, and
complete types are determined by their differential polynomial
equations; see McGrail~\cite{McGrail} and the summary in
Le\'on S\'anchez~\cite{LS}.

Let $\Delta=\{D,\partial\}$ and $K\subseteq \Uu$ be a $\Delta$-differential subfield. For any $a\in \Uu$, we denote by $K\langle a\rangle_\Delta$ the smallest $\Delta$-differential subfield of $\Uu$ containing $K$ and $a$. A tuple $a$ is \emph{finite-dimensional over $K$} if
$\trdeg(K\langle a\rangle_\Delta/K)<\infty$.
We also use the standard  rank bound stated below and include its proof for completeness.

\begin{lemma}\label{finite-dimensional-bound}
Let $K\subset \Uu$ be a small algebraically closed $\Delta$-differential field,
and let $Z$ be a $K$-definable set. Then, for any $d\geqslant0$, we have the following
\[
\left(\forall a\in Z \ \trdeg(K\langle a\rangle_\Delta/K)\leqslant d\right)\qquad \Longrightarrow\qquad
\qquad \RM(Z)\leqslant d.
\]
\end{lemma}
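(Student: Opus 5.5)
We will argue by induction on $d$, using the standard characterization of Morley rank in an $\omega$-stable theory via ranks of complete types: $\RM(Z)=\max\{\RM(q): q\in S(K),\ q\ni Z\}$. Thus it suffices to show that every complete type $q=\tp(a/K)$ with $\trdeg(K\langle a\rangle_\Delta/K)\leqslant d$ has $\RM(q)\leqslant d$. It is convenient to prove the stronger statement: for every small algebraically closed $\Delta$-field $L$ and every $a$, we have $\RM(\tp(a/L))\leqslant \trdeg(L\langle a\rangle_\Delta/L)$. The lemma follows, since then every point of $Z$ has type of rank at most $d$ over $K$, and $\RM(Z)$ is the maximum of these ranks. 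Here we use that $Z$ is $K$-definable, so the maximum may be taken over types over $K$. We also use that Morley rank is invariant under passing to larger saturated base sets, so that ranks over $K$ agree with ranks over elementary extensions.

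We induct on $e=\trdeg(L\langle a\rangle_\Delta/L)$. If $e=0$, then $a$ is field-algebraic over $L$. Since $L$ is algebraically closed, $a\in L$, so the type is algebraic and has rank $0$.

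For the inductive step, suppose $\RM(\tp(a/L))\geqslant e+1$. Then there is an extension $q'=\tp(a'/L')$ with $L'\supseteq L$ small and algebraically closed (we may close off under $\acl$ without changing ranks) and $\RM(q')\geqslant e$. Moreover, $q'$ is a forking extension, since a nonforking extension has the same Morley rank in an $\omega$-stable theory. Indeed, the definition of $\RM\geqslant e+1$ yields infinitely many pairwise inconsistent formulas of rank $\geqslant e$. At most finitely many of these (bounded by the Morley degree) can contain nonforking extensions. So we may pick one containing a type of rank $\geqslant e$ that forks over $L$.

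In $\DCF_{0,2}$, forking is the field-theoretic independence of the generated differential fields. Since $L$ is algebraically closed, $a'\ind_L L'$ if and only if $L\langle a'\rangle_\Delta$ and $L'$ are algebraically disjoint over $L$. Hence forking forces
\[
\trdeg(L'\langle a'\rangle_\Delta/L')<\trdeg(L\langle a'\rangle_\Delta/L)=e .
\]
Applying the induction hypothesis to $a'$ over $L'$ gives $\RM(q')\leqslant e-1$, a contradiction. Therefore $\RM(\tp(a/L))\leqslant e$.

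The main obstacle is the step converting rank growth into a forking extension of the same type, and then into a strict drop of transcendence degree. The characterization of forking as algebraic non-disjointness in $\DCF_{0,m}$ is standard (McGrail), and the finite-transcendence-degree hypothesis makes the strict inequality meaningful. The delicate point is to make sure the Cantor–Bendixson style definition of $\RM$ really produces an extension of rank $\geqslant e$ that forks. I would handle this exactly through the Morley degree argument above: a type over $L$ has only finitely many nonforking extensions of maximal rank, so among infinitely many disjoint formulas some carry forking extensions.
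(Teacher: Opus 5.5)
\textbf{There is a genuine gap, and it sits in the forking step.} Your reduction to complete types over $K$ is fine. So is the base case, and so is the last step: for algebraically closed $L$ and finite $e$, a forking extension strictly lowers $\trdeg$. The argument fails exactly where you flagged it as delicate, namely the passage from $\RM(q)\geqslant e+1$ to a \emph{forking extension of $q$} of rank $\geqslant e$.

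The definition of Morley rank gives a formula $\varphi\in q$ and infinitely many pairwise disjoint definable $\psi_i\subseteq\varphi(\Uu)$ with $\RM(\psi_i)\geqslant e$. Nothing forces any $\psi_i$ to be consistent with $q$. The rank-$\geqslant e$ types living on the $\psi_i$ extend $\varphi$, not $q$, so the Morley-degree count of nonforking extensions of $q$ says nothing about them.

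In fact the principle you invoke cannot hold. The principle is: $\RM(q)\geqslant\alpha+1$ implies $q$ has a forking extension of Morley rank $\geqslant\alpha$. By induction on $\alpha$ it would give $\RM(q)\leqslant U(q)$, hence $U=\RM$ for all types in every $\omega$-stable theory. This is false already for finite-dimensional types in $\DCF_0$ (Hrushovski--Scanlon). The paper's main theorem is a counterexample of exactly this shape: $\RM(p)=\omega$ but $U(p)=1$, so $p$ has no nonalgebraic forking extension. Meanwhile the sets $Y_n\subseteq X$ that witness the large rank of the formula $X\in p$ are disjoint from $p(\Uu)$. A Morley-rank bound for a type therefore cannot come from inducting through its forking extensions; you need control over whole definable neighbourhoods.

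\textbf{How to close it.} The paper gets this control from McGrail's upper bound $\RM(q)<\omega^{\tau}(a_\tau+1)$, applied to the constant Kolchin polynomial $e$. For a self-contained proof, run the induction on definable sets instead of types. Work over a small algebraically closed $K'\supseteq K$ containing all parameters.
\begin{enumerate}
\item Each irreducible component $C$ of the Kolchin closure of $Z$ has its generic point in $Z$, since a dense constructible subset contains a nonempty open one. So its generic transcendence degree is at most $d$.
\item A proper Kolchin-closed subset of such a finite-dimensional irreducible $C$ has generic points of strictly smaller transcendence degree. Otherwise the two prime differential ideals would agree on every finite set of jet variables containing transcendence bases for both, because nested primes of equal dimension in a polynomial ring coincide.
\item Hence, by induction, each of the infinitely many disjoint $Z_i\subseteq Z$ with $\RM(Z_i)\geqslant d$ contains a nonempty relatively open subset of one of the finitely many components of generic transcendence degree $d$. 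Two of the $Z_i$ then share a component, and two nonempty open subsets of an irreducible set must meet, contradicting disjointness.
\end{enumerate}
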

\begin{proof}
We may assume that $Z$ is nonempty. Choose $a\in Z$
such that
\[
\RM(a/K)=\RM(Z),
\]
and put
\[
e=\trdeg(K\langle a\rangle_\Delta/K)\leqslant d.
\]
If $e=0$, then $a$ is algebraic over $K$, so
$\RM(Z)=0$. Otherwise, the Kolchin polynomial of
$q=\tp(a/K)$ is the constant polynomial $e$.
Thus $\tau(q)=0$ and $a_0(q)=e$.

The upper-bound argument in~\cite[Lemma~5.2.6 and
Proposition~4.2.9]{McGrail}, applied to the constant
Kolchin polynomial $e$, gives
\[
\RM(q)<\omega^0(e+1)=e+1.
\]
Hence $\RM(q)\leqslant e$.
\end{proof}
The next result gives Kolchin irreducibility.
\begin{lemma}\label{irred}
For any $\Delta$-differential subfield $K$ of $\Uu$, let $K\{Y\}_{\Delta}$ denote the $\Delta$-differential ring of $\Delta$-differential polynomials over $K$. Let $I$ be the $\Delta$-differential ideal of $K\{Y\}_{\Delta}$ which is $\Delta$-differentially generated by $DY-Y\partial Y$. Then we have the following $\Delta$-differential isomorphism over $K$
\begin{equation}\label{coordinate-ring}
 K\{Y\}_\Delta/I
 \cong K[y_0,y_1,y_2,\ldots],
 \notag
\end{equation}
where $y_0,y_1,y_2,\ldots$ are algebraically independent over $K$ and
\begin{equation}\label{jet-actions}
 \partial y_j=y_{j+1},\qquad
 Dy_j=\partial^j(y_0y_1).
\notag
\end{equation}
In particular, $X$ is absolutely Kolchin irreducible.
\end{lemma}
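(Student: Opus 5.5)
We will construct the quotient ring explicitly rather than analyse $I$ directly. Let $R=K[y_0,y_1,y_2,\ldots]$ be the polynomial ring in countably many indeterminates. Define derivations on $R$ by prescribing them on generators:
\[
\partial y_j=y_{j+1},\qquad Dy_j=\partial^j(y_0y_1).
\]
On coefficients they act as on $K$; any prescription on free generators of a polynomial ring extends uniquely to a derivation. We must also check that $D\partial=\partial D$ on $R$. The commutator $[D,\partial]$ is a derivation that vanishes on $K$, since $K$ is a $\Delta$-field. It therefore suffices to check it on generators:
\[
D\partial y_j=Dy_{j+1}=\partial^{j+1}(y_0y_1)=\partial\bigl(\partial^j(y_0y_1)\bigr)=\partial Dy_j.
\]
So $R$ is a $\Delta$-ring extending $K$.

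Next, consider the $\Delta$-homomorphism $\Phi\colon K\{Y\}_\Delta\to R$ over $K$ with $Y\mapsto y_0$. It exists by the universal property of the differential polynomial ring in one indeterminate. It sends $DY-Y\partial Y$ to $Dy_0-y_0y_1=0$, so $I\subseteq\ker\Phi$. It is surjective because $\partial^jY\mapsto y_j$. For the reverse inclusion, we use the rewriting argument. Modulo $I$, each $\partial^jD^kY$ is congruent to a polynomial in the pure $\partial$-jets $\partial^iY$. To see this, induct on $k$: apply $\partial^jD^{k-1}$ to $DY\equiv Y\partial Y$, then use commutativity and the induction hypothesis. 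Here $D$ applied to a polynomial in the $\partial^iY$ is reduced via $D\partial^iY=\partial^i DY\equiv\partial^i(Y\partial Y)$.

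Hence every element $f\in K\{Y\}_\Delta$ satisfies $f\equiv g \pmod I$ for some $g\in K[Y,\partial Y,\partial^2Y,\ldots]$. If $f\in\ker\Phi$, then $\Phi(g)=g(y_0,y_1,\ldots)=0$. Since the $y_j$ are algebraically independent, $g=0$, and so $f\in I$. Thus $K\{Y\}_\Delta/I\cong R$ as $\Delta$-rings.

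For the final claim, $R$ is an integral domain, so $I$ is prime. The same holds for every extension $L\supseteq K$, and in particular for $L$ algebraically closed, since the construction is uniform in the base field. Because $I$ is prime, it is radical, hence it equals the $\Delta$-radical ideal $\{DY-Y\partial Y\}$, which is exactly the ideal of $X$. Therefore $X$ is Kolchin irreducible over every differential field, i.e.\ absolutely irreducible.

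The main obstacle is the kernel inclusion, that is, making the rewriting argument rigorous. The existence of $R$ and the commutation check are routine. The alternative I would reach for first, if the induction proves clumsy, is a Ritt-style argument: use an orderly ranking with $D$ above $\partial$. The single generator $DY-Y\partial Y$ is linear in its leader $DY$ with separant $1$, so it forms a coherent autoreduced set, because $\partial$-derivatives of the leader never collide with another leader. Rosenfeld's lemma then gives that $I$ is prime and that every element reduced with respect to it and lying in $I$ is zero. This is exactly the statement that $g=0$ above.
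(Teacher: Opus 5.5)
Your proof is correct and is essentially the paper's argument: the same explicit $\Delta$-ring $R$ and commutation check. Your rewriting step plus algebraic independence of the $y_j$ is exactly the paper's observation that $\Psi\colon y_j\mapsto\overline{\partial^jY}$ is onto (its image is a $\Delta$-subring containing $K$ and $\overline{Y}$, which avoids your induction on $k$) and satisfies $\Phi\circ\Psi=\mathrm{id}_R$, and the irreducibility conclusion via primality of $I$ and the differential Nullstellensatz is likewise the same.
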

\begin{proof}
We put $R:=K[y_0,y_1,y_2,\ldots]$, where the $y_0,y_1,y_2,\ldots$ are algebraically
independent over $K$. Extend the derivations of $K$ to $R$ by
\[
\partial y_j=y_{j+1},
\qquad
Dy_j=\partial^j(y_0y_1)
=\sum_{k=0}^{j}\binom{j}{k}y_ky_{j-k+1}.
\]
These derivations commute on $K$ and on every generator, since
\[
D(\partial y_j)=Dy_{j+1}
=\partial^{j+1}(y_0y_1)=\partial(Dy_j).
\]
Their commutator is a derivation, so they commute on all of $R$.
Moreover, we have $Dy_0=y_0\partial y_0$. Hence the $\Delta$-differential evaluation map 
\[
\operatorname{ev}_{y_0}:K\{Y\}_\Delta\longrightarrow R
\]
induces a $\Delta$-differential $K$-algebra homomorphism
\[
\Phi:K\{Y\}_{\Delta}/I\longrightarrow R.
\]
It is enough to show that $\Phi$ is a bijection. We define a $K$-algebra homomorphism
\[
\Psi:R\longrightarrow K\{Y\}_{\Delta}/I,
\qquad
y_j\longmapsto\overline{\partial^jY},
\]
where for $f\in K\{Y\}_{\Delta}$, we set $\bar{f}:=f+I$. The relation defining $I$ gives
\[
D\overline{\partial^jY}
=\overline{\partial^j(Y\partial Y)}
=\sum_{k=0}^{j}\binom{j}{k}
  \overline{\partial^kY}\,
  \overline{\partial^{j-k+1}Y}.
\]
Thus the image of $\Psi$ is closed under both derivations.
Since this image contains $K$ and $\overline Y$, the map $\Psi$ is onto.
Since we also have $\Phi\circ\Psi=\operatorname{id}_R$, both maps $\Phi$ and $\Psi$ are bijections.

For the ``In particular'' part, we have shown that the ring $R$ is an integral domain, so $I$ is a prime differential
ideal. By the differential Nullstellensatz
\cite[Chapter~IV, \S3, Theorem~2, p.~147]{Kolchin},
 $X$ is Kolchin
irreducible. The same construction works after replacing $K$ by
any differential field extension; therefore $X$ is absolutely
Kolchin irreducible.
\end{proof}
We need the following.
\begin{proposition}
Let $(\mathcal U,\Delta)\models\mathrm{DCF}_{0,m}$, let
$K\subseteq\mathcal U$ be a differential subfield, and let
$V\subseteq\mathcal U^n$ be a nonempty Kolchin-closed set defined
over $K$ and irreducible over $K$. Then Kolchin genericity in $V$
over $K$ determines a unique complete type over $K$.
\end{proposition}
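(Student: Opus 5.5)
We have to show that the generic type is well defined. By quantifier elimination in $\DCF_{0,m}$, a complete type $\tp(a/K)$ is determined by the set of differential polynomials $f\in K\{Y_1,\ldots,Y_n\}_\Delta$ that vanish at $a$. For such a type the set
\[
I(a/K):=\{f\in K\{Y\}_\Delta : f(a)=0\}
\]
is a prime $\Delta$-ideal. Conversely, every prime $\Delta$-ideal $\mathfrak p$ of this ring is of the form $I(a/K)$ for some $a\in\Uu^n$. To see this, take the fraction field $L$ of $K\{Y\}_\Delta/\mathfrak p$ with its induced commuting derivations. Since $\Uu$ is a saturated model of $\DCF_{0,m}$, the differential field $L$ embeds into $\Uu$ over $K$. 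This embedding uses model completeness of $\DCF_{0,m}$ and saturation, and it is available because $K\subseteq\Uu$ is small.

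We then make Kolchin genericity precise. Let $\mathfrak p_V:=I(V/K)$ be the ideal of differential polynomials over $K$ vanishing on $V$. By the differential Nullstellensatz (as cited in the proof of Lemma~\ref{irred}), $\mathfrak p_V$ is a radical $\Delta$-ideal, and it is prime because $V$ is $K$-irreducible. A point $a\in V$ is \emph{Kolchin generic over $K$} if it lies in no proper $K$-definable Kolchin-closed subset of $V$. Equivalently, every $f\in K\{Y\}_\Delta$ with $f(a)=0$ vanishes on all of $V$, that is, $I(a/K)=\mathfrak p_V$.

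The argument has two steps.
\begin{enumerate}
\item \emph{Uniqueness.} If $a,a'$ are both generic, then $I(a/K)=\mathfrak p_V=I(a'/K)$. The map $K\{Y\}_\Delta/\mathfrak p_V\to K\langle a\rangle_\Delta$ sending $\bar Y\mapsto a$ is then an isomorphism of differential domains, and likewise for $a'$. So there is a $\Delta$-isomorphism $K\langle a\rangle_\Delta\cong K\langle a'\rangle_\Delta$ over $K$ sending $a\mapsto a'$. By quantifier elimination, atomic formulas over $K$ are exactly the differential polynomial (in)equalities, so $\tp(a/K)=\tp(a'/K)$.
\item \emph{Existence.} We realize $\mathfrak p_V$ by a point $a\in\Uu^n$ using the embedding argument above, with $\mathfrak p=\mathfrak p_V$. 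Then $a\in V$, because $V$ is the zero set of $\mathfrak p_V$: it is Kolchin closed and defined over $K$, hence cut out by differential polynomials over $K$ by the Ritt--Raudenbush basis theorem. Moreover $I(a/K)=\mathfrak p_V$, so $a$ is generic.
\end{enumerate}

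The only delicate point is the existence step. There one must embed the abstract differential field $L$ into $\Uu$ over the possibly non-algebraically-closed $K$. I would take a differential closure of $L$ (a model of $\DCF_{0,m}$ containing $K$), use model completeness to see that the quantifier-free type of the generator over $K$ is consistent with the theory of $\Uu$ with constants for $K$, and then realize it by saturation.

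Finally, I will remark that the resulting unique type is the generic type $p$ in Theorem~\ref{main}. By Lemma~\ref{irred} the ideal $\mathfrak p_X$ corresponds to $M[y_0,y_1,\ldots]$ with the $y_j$ algebraically independent. Hence $a$ is generic in $X$ exactly when $Da=a\partial a$ and $a,\partial a,\partial^2a,\ldots$ are algebraically independent over $M$.
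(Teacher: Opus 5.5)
Your proof is correct, but it takes a different route from the paper. The paper gives no differential-algebraic argument for this statement. It obtains the proposition as a special case of a cited general fact about Noetherian theories (the Remark following the statement): for a nonempty $A$-closed, $A$-irreducible $V$, the partial type $\Sigma_V$ (``$x\in V$ and $x$ avoids every proper $A$-closed subset of $V$'') has a unique completion. The underlying mechanism is topological. $A$-definable sets are Boolean combinations of $A$-closed sets. For each such set $Z$, irreducibility forces one of $V\cap Z$ and $V\setminus Z$ into a proper $A$-closed subset, so $\Sigma_V$ decides every formula. Irreducibility together with compactness gives consistency.

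You instead use the algebraic dictionary between types over $K$ and prime $\Delta$-ideals of $K\{Y\}_\Delta$. By quantifier elimination $\tp(a/K)$ is determined by $I(a/K)$, and genericity is exactly $I(a/K)=\mathfrak p_V$, so uniqueness is immediate. Existence comes from realizing the fraction field of $K\{Y\}_\Delta/\mathfrak p_V$ in $\Ucal$.

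The paper's route is more general and yields the bijection $V\mapsto p_V$ with $S_n(A)$ at no extra cost. Yours is specific to $\DCF_{0,m}$, but it produces the generic type explicitly as the type with vanishing ideal $\mathfrak p_V$. That is precisely the form in which it is combined with Lemma~\ref{irred} to describe $p$.

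Two minor imprecisions, neither affecting correctness:
\begin{enumerate}
\item To embed $L$ over a $K$ that is not a model, model completeness alone is not the right tool. You need substructure completeness (i.e.\ the quantifier elimination you already invoke) or amalgamation of differential fields over $K$.
\item Neither the Nullstellensatz nor Ritt--Raudenbush is needed where you cite them. $\mathfrak p_V$ is a radical $\Delta$-ideal because it is an intersection of kernels of differential evaluation maps into $\Ucal$. Also $V=Z(\mathfrak p_V)$ simply because the $K$-polynomials defining $V$ lie in $\mathfrak p_V$.
\end{enumerate}
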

This proposition follows from a more general result below (see the paragraph immediately preceding \cite[Proposition 4.7]{HK}).
\begin{remark}
Let 
$\mathfrak C\models T$ be a monster model of a Noetherian theory and let
$A\subseteq\mathfrak C$ be small. Thus the $A$-closed sets form
a Noetherian topology, and the $A$-definable sets are precisely
the Boolean combinations of $A$-closed sets.

If $V\subseteq\mathfrak C^n$ is nonempty, $A$-closed, and
$A$-irreducible, then the partial type
\[
 \Sigma_V(x)
 =
 \{x\in V\}
 \,\cup\,
 \{x\notin W:
     W\subsetneq V\text{ is }A\text{-closed}\}
\]
has a unique complete extension $p_V\in S_n(A)$. Moreover, the assignment $V\mapsto p_V$ is a bijection between
nonempty $A$-closed $A$-irreducible subsets of $\mathfrak C^n$
and $S_n(A)$.
\end{remark}
By Lemma~\ref{irred}, the
Kolchin-generic type $p(x)$ of $X$ over $M$ is determined by:
\[
Dx=x\partial x\qquad \text{and}\qquad \text{$x$ is $\partial$-differentially transcendental over $M$}.
\]
We prove now the main result of this section.
\begin{theorem}\label{independence}
Let $K\subseteq \Uu$ be a $\Delta$-differential subfield and $a_1,\ldots,a_n\in \Uu$ be pairwise distinct such that for each $1\leqslant i\leqslant n$ we have:
\begin{itemize}
  \item $ Da_i=a_i\partial a_i$,
  \item $a_i$ is $\partial$-differentially transcendental over $K$.
\end{itemize}
Then the sequence
\[
 (\partial^j a_i:1\leqslant i\leqslant n,\ j\geqslant0)
\]
is algebraically independent over $K$.
\end{theorem}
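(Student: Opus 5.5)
We argue by contradiction, by induction on $n$, using Kähler differentials. The case $n=1$ is exactly the hypothesis that $a_1$ is $\partial$-differentially transcendental. Assume the statement holds for $n-1$ solutions, and put
\[
F:=K(\partial^j a_i: i<n,\ j\geqslant0).
\]
By the description of the jet actions in Lemma~\ref{irred}, $F$ is a $\Delta$-differential field, since $D\partial^j a_i=\partial^j(a_i\partial a_i)$. By induction, the $\partial$-jets of $a_1,\dots,a_{n-1}$ are algebraically independent over $K$.

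Suppose the jets of $a_n$ are algebraically dependent over $F$. Take a relation of minimal order $r$, and within that of minimal degree in $\partial^r a_n$. Differentiating it by $\partial$ (its separant is nonzero) shows that every $\partial^{r+k}a_n$ is algebraic over $F(a_n,\dots,\partial^{r-1}a_n)$. The field $E:=F(\partial^j a_n:j\geqslant 0)$ is then a $\Delta$-field with $\trdeg(E/F)=r<\infty$. Since $a_n$ is $\partial$-transcendental over $K$, the set $\{\partial^j a_n:j\geqslant0\}$ is still infinite-dimensional over $K$. Hence the finite-dimensionality must be absorbed by the jets of the $a_i$ with $i<n$, so the relation genuinely involves them.

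Next I pass to $\Omega_{E/K}$, on which $D$ and $\partial$ induce additive operators $\mathcal L_D,\mathcal L_\partial$ (Lie derivatives) with $\mathcal L_\delta(f\,\dd g)=\delta(f)\,\dd g+f\,\dd(\delta g)$. Write $\omega_{i,j}:=\dd(\partial^j a_i)$. The key computation, from $D\partial^j a_i=\partial^j(a_i\partial a_i)$, is
\[
\mathcal L_D\,\omega_{i,j}=a_i\,\omega_{i,j+1}+(j+1)(\partial a_i)\,\omega_{i,j}+(\text{terms in }\omega_{i,k},\ k<j),
\]
while $\mathcal L_\partial\,\omega_{i,j}=\omega_{i,j+1}$. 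So on the ``$a_i$-block'' the operator $\mathcal L_D-a_i\mathcal L_\partial$ does not raise the order $j$. In other words, the principal symbol of $\mathcal L_D$ relative to $\mathcal L_\partial$ on that block is multiplication by $a_i$.

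By the induction hypothesis, the $\omega_{i,j}$ with $i<n$ form a basis of $\Omega_{F/K}\otimes_F E$. Since $\trdeg(E/F)<\infty$, the finitely many $\omega_{n,j}$ with $j\leqslant r$ span $\Omega_{E/F}$. Hence for large $N$ there is a nontrivial relation
\[
\omega_{n,N}=\sum_{j<r}c_j\,\omega_{n,j}+\eta_N,\qquad \eta_N\in\Omega_{F/K}\otimes E,
\]
and $\eta_N\neq0$ for infinitely many $N$, by $\partial$-transcendence of $a_n$ over $K$.

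The plan is to apply $\mathcal L_D-a_n\mathcal L_\partial$ to these relations. This operator preserves the filtration by order on the $a_n$-block. On the left-hand side and on the $c_j\omega_{n,j}$ terms, the result is expressible through the relations of lower $N$ together with finitely many fixed differentials. On $\eta_N$, viewed in the basis $\omega_{i,j}$ with $i<n$, its top-order components in the $i$-block are sent to $(a_i-a_n)$ times their $\partial$-shift, plus lower-order terms. Since $a_i\neq a_n$, these are invertible scalars. Comparing the highest order of $\eta_N$ in some block $i<n$ before and after applying the operator, the orders of the $\eta_N$ are forced to grow strictly, yet they must remain bounded by what the finitely many relations generate. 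This yields a contradiction.

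The main obstacle is this symbol comparison: making precise that $\mathcal L_D-a_n\mathcal L_\partial$ strictly raises the $i$-block order for $i\neq n$ while being order-preserving on the $a_n$-block and on the coefficients. In particular, I must control the contributions of $D c_j$ and $\partial c_j$, where the $c_j$ lie in $E$ and may themselves involve high jets of the $a_i$. I would handle this by working with the leading-order filtration on $\Omega_{E/K}$, where $E$ is filtered by order in each block, and by choosing $N$ larger than the orders appearing in the $c_j$.
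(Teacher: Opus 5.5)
Your strategy works, but it is organized differently from the paper's, although both rest on the same symbol computation: $D-a\,\partial$ does not raise order on the $a$-block, and on every other block it raises order by one with leading coefficient $a_i-a\neq0$. The paper does not induct on $n$. It works in $\Omega_{F/K}$ with $F$ generated by the jets of \emph{all} the $a_i$. There each family $(e_{i,j})_j$ is already $F$-linearly independent by the individual transcendence hypothesis. The paper then shows the blocks $\sum_jF\,e_{i,j}$ form a direct sum by taking a relation $v_1+\cdots+v_r=0$ with fewest blocks. Since $T=D_\Omega-a_1\partial_\Omega$ is semilinear and stabilizes the finite-dimensional space $\sum_{j\leqslant m}F\,e_{1,j}$ with $m=\ord_1(v_1)$, some nontrivial $\sum_kc_kT^kv_1$ vanishes. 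Applying the same expression to the other $v_i$ gives a shorter relation. You instead use the induction hypothesis to split $W:=\sum_{i<n,\,j}E\,\omega_{i,j}$ (your $\Omega_{F/K}\otimes_FE$) as $\bigoplus_{i<n}W_i$ with $W_i=\sum_jE\,\omega_{i,j}$. You then take $T=\mathcal L_D-a_n\mathcal L_\partial$ with the new solution as reference, and use $\trdeg(E/F)<\infty$ to force a finite-dimensional $T$-stable subspace into $W$. The paper's minimal-relation trick lets it avoid both the induction and the comparison of $\Omega_{E/K}$ with $\Omega_{E/F}$. Your version makes explicit the dimension count behind the contradiction.

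Everything up to the relations $\omega_{n,N}=\sum_{j<r}c_j\omega_{n,j}+\eta_N$ is fine. Your last step, however, is only a heuristic, and the obstacle you flag is not real. Measure orders with respect to the $E$-basis $\{\omega_{i,j}\}_{i<n}$ of $W$. Since $T(fv)=(Df-a_n\partial f)v+f\,Tv$, derivatives of coefficients only produce terms of the same order. Hence $T(W_i)\subseteq W_i$ and $\ord_i(Tv)=\ord_i(v)+1$ for $0\neq v\in W_i$, so no filtration of $E$ is needed. Your proposed remedy of choosing $N$ beyond the orders of the $c_j$ would in any case be circular, since the $c_j$ depend on $N$.

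Close instead with a dimension count. Put $U_m:=\sum_{j\leqslant m}E\,\omega_{n,j}$. It is $T$-stable, and $\dim_EU_m=m+1$ by $\partial$-transcendence of $a_n$ over $K$. Also $\dim_E(\Omega_{E/K}/W)=\dim_E\Omega_{E/F}=r$. So $U_r\cap W$ is a nonzero, finite-dimensional, $T$-stable $E$-subspace of $W$. But for $0\neq v\in W$ the vectors $v,Tv,T^2v,\ldots$ are $E$-linearly independent, because their components in a block where $v$ is nonzero have strictly increasing orders. This is a contradiction. In your own formulation: since $U_{r-1}\cap W=0$, the vectors $\eta_r,\ldots,\eta_N$ span a nonzero $T$-stable subspace of $W$, which is impossible for the same reason.
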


\begin{proof}
We will use the module of K\"{a}hler differentials and its standard properties (see e.g. \cite{Stacks}). Most importantly, we use the fact that, for a field extension $K\subseteq F$ in characteristic zero, a family of
elements of $F$ is algebraically independent over $K$ if
and only if its differentials are $F$-linearly independent
in $\Omega_{F/K}$. Put
\[
 F:=K(\partial^j a_i:1\leqslant i\leqslant n,\ j\geqslant0),
 \qquad \Omega:=\Omega_{F/K}.
\]
Then $F$ is stable under both
derivations and they induce the following commuting Lie derivatives on $\Omega$
\[
 \delta_\Omega(f\,\dd g)
 =\delta(f)\,\dd g+f\,\dd(\delta g),
 \qquad \delta\in\{D,\partial\}.
\]
These operators are well defined because both derivations preserve $K$. Let us write
\[
 e_{i,j}=\dd(\partial^j a_i),\qquad
 E_i^{\leqslant m}=e_{i,0}F+\ldots+e_{i,m}F,\qquad
 E_i=\bigcup_{m\geqslant0}E_i^{\leqslant m}.
\]
For each fixed $i$, the family $(e_{i,j})_{j\geqslant0}$ is $F$-linearly
independent by our transcendence assumption. Thus every nonzero $v\in E_i$ has a unique expression
\[
 v=c_m e_{i,m}+\cdots+c_0e_{i,0},\qquad c_m\neq0.
\]
Let us define $\ord_i(v)=m$ and write
\[
v=c_me_{i,m}+\text{l.o.t.},
\]
where l.o.t. stands for lower-order terms.
We have the following.
\begin{align}
D_\Omega(e_{i,0})
  &= \dd\bigl(D(a_i)\bigr) \notag\\
  &= \dd\bigl(a_i\,\partial(a_i)\bigr) \notag\\
  &= a_i\,\dd\bigl(\partial(a_i)\bigr)
     + \partial(a_i)\,\dd(a_i) \notag\\
  &= a_i e_{i,1} + \partial(a_i)e_{i,0} \notag.
\end{align}
Applying $\partial_\Omega^j$, using commutativity and the Leibniz rule,
we obtain
\begin{equation}\label{leading}
 \partial_\Omega e_{i,j}=e_{i,j+1},\qquad
 D_\Omega e_{i,j}-a_i e_{i,j+1}\in E_i^{\leqslant j}
 \quad(j\geqslant0).
\end{equation}
{\bf Claim}
\\
The sum of the spaces $E_i$ is direct.
\begin{proof}[Proof of Claim]
Suppose not and choose a
relation involving the fewest possible spaces, and renumber:
\begin{equation}\label{minimal}
 v_1+\cdots+v_r=0,\qquad
 0\neq v_i\in E_i,\qquad r\geqslant2.
\end{equation}
The key point is that $D_\Omega$ has leading coefficient
$a_i$ on the $i$th family. The distinctness of the $a_i$
therefore allows us to cancel the leading term on one
family while retaining it on all the others. Set
\[
 T:=D_\Omega-a_1\partial_\Omega, \qquad T:\Omega\longrightarrow \Omega.
\]
Then $T$ need not be $F$-linear, but it is additive and satisfies
\begin{equation}\label{semilinear}
 T(fv)=(Df-a_1\partial f)v+fT(v),\qquad f\in F, v\in \Omega.
\end{equation}
Equations \eqref{leading} and \eqref{semilinear} imply
\begin{equation}\label{preserve}
 T\left(E_1^{\leqslant m}\right)\subseteq E_1^{\leqslant m}
 \quad\text{for every }m\geqslant0.
\end{equation}
On the other hand, for $i\neq1$ and
$v=c_m e_{i,m}+\text{lower-order terms}$ with $c_m\neq0$, they give
\[
 Tv=c_m(a_i-a_1)e_{i,m+1}+\text{lower-order terms}.
\]
Since $a_i\neq a_1$, it follows that
\begin{equation}\label{increase}
 \ord_i(Tv)=\ord_i(v)+1
 \qquad(i\neq1,\ 0\neq v\in E_i).
\end{equation}
Let $m=\ord_1(v_1)$. By \eqref{preserve}, all the vectors
\[
 v_1,Tv_1,\ldots,T^{m+1}v_1
\]
belong to the $(m+1)$-dimensional space $E_1^{\leqslant m}$. Hence there
exist $c_0,\ldots,c_q\in F$, with $1\leqslant q\leqslant m+1$ and $c_q\neq0$,
such that
\begin{equation}\label{annihilate}
 \sum_{k=0}^q c_kT^kv_1=0.
\end{equation}
For $i=2,\ldots,r$, put
\[
 w_i=\sum_{k=0}^q c_kT^kv_i\in E_i.
\]
By \eqref{increase}, the vectors $T^kv_i$ have strictly increasing
orders. Therefore the summand $c_qT^qv_i$ has strictly higher order
than every other nonzero summand, and $w_i\neq0$.

Apply $T^k$ to \eqref{minimal}, multiply by $c_k$, and sum over $k$.
Additivity of $T$ and \eqref{annihilate} yield
\[
 w_2+\cdots+w_r=0,\qquad 0\neq w_i\in E_i.
\]
This contradicts the minimality of $r$.
\end{proof}
By the claim, the sum of the $E_i$ is direct. Since each individual
family $(e_{i,j})_{j\geqslant0}$ is linearly independent, all the
$e_{i,j}$ are jointly $F$-linearly independent, which gives the desired algebraic independence.
\end{proof}

\section{Rank computations}\label{sec3}
Throughout this section, $p\in S_1(M)$ denotes the
Kolchin-generic type from Theorem~\ref{main}. Having the preliminary results from Section 2, we put them together in this section to obtain a proof of Theorem~\ref{main}. We start from the U-rank computations.
\begin{proposition}\label{minimality}
We have $U(p)=1$.
\end{proposition}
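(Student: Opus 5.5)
The plan is to verify the hypothesis of Lemma~\ref{indiscernible-minimal} for $p$ over $A=M$. First, $p$ is nonalgebraic, since a realization has infinite transcendence degree over $M$. It is also stationary, because $M$ is an algebraically closed differential field. It therefore suffices to show that any two ordered $n$-tuples $(a_1,\dots,a_n)$ and $(b_1,\dots,b_n)$ of pairwise distinct realizations of $p$ have the same type over $M$.

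Fix such a tuple $(a_1,\dots,a_n)$. Each $a_i$ satisfies $Da_i=a_i\partial a_i$. By the description of $p$ following Lemma~\ref{irred}, each $a_i$ is also $\partial$-differentially transcendental over $M$. Applying Theorem~\ref{independence} with $K=M$, the family $(\partial^j a_i)_{i\le n,\,j\ge0}$ is algebraically independent over $M$.

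Next we identify the differential field generated by the tuple. Let
\[
F=M(\partial^j a_i: i\le n,\ j\ge0).
\]
Then $F$ is closed under $\partial$ by construction. It is closed under $D$ because $D\partial^j a_i=\partial^j(a_i\partial a_i)$ is a polynomial in the jets, exactly as in the proof of Lemma~\ref{irred}. Hence $F=M\langle a_1,\dots,a_n\rangle_\Delta$. Its isomorphism type as a $\Delta$-field over $M$ is completely determined: it is the fraction field of the polynomial ring $M[y_{i,j}]$ in independent variables, with
\[
\partial y_{i,j}=y_{i,j+1},\qquad Dy_{i,j}=\sum_{k=0}^{j}\binom{j}{k}y_{i,k}y_{i,j-k+1}.
\]
The same holds for $(b_1,\dots,b_n)$. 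The map $\partial^j a_i\mapsto \partial^j b_i$ is a field isomorphism over $M$, by algebraic independence on both sides. It commutes with $D$ and $\partial$ on generators, since these act by the same formulas, and hence on all of $F$, as $D$ and $\partial$ are derivations.

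By quantifier elimination in $\DCF_{0,2}$, the type of a tuple over $M$ is determined by the $\Delta$-isomorphism type over $M$ of the differential field it generates together with the generators. Equivalently, the tuple satisfies the same differential polynomial equations over $M$, namely exactly those lying in the kernel of the evaluation map into the above polynomial ring. So $\tp(\bar a/M)=\tp(\bar b/M)$, and Lemma~\ref{indiscernible-minimal} yields $U(p)=1$.

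The only substantive input is Theorem~\ref{independence}, which is already established. The remaining delicate point is the bookkeeping showing that closure under $D$ adds no new generators, so that the jets really generate the whole $\Delta$-field. This follows directly from the Burgers relation $Da_i=a_i\partial a_i$ and its $\partial$-derivatives.
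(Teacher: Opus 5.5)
Your proposal is correct and follows the paper's proof: you verify the hypothesis of Lemma~\ref{indiscernible-minimal} by using Theorem~\ref{independence} to show that tuples of distinct realizations have equal quantifier-free types over $M$, and then apply quantifier elimination. The only difference is that you spell out a step the paper leaves implicit. The Burgers relation makes the $\partial$-jets generate $M\langle a_1,\dots,a_n\rangle_\Delta$, so algebraic independence of the jets fixes its $\Delta$-isomorphism type over $M$.
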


\begin{proof}
We will check the condition from Lemma~\ref{indiscernible-minimal}. Let us take
\[
a_1,b_1,\ldots,a_n,b_n\models p,\qquad |\{a_1,\ldots,a_n\}|=n=|\{b_1,\ldots,b_n\}|.
\]
By Theorem~\ref{independence}, we get
\[
\operatorname{qftp}(a_1,\ldots,a_n/M)=\operatorname{qftp}(b_1,\ldots,b_n/M).
\]
By quantifier elimination, the complete types coincide as well, so the result holds by Lemma~\ref{indiscernible-minimal}.
\end{proof}
We can show now geometric triviality.
\begin{corollary}\label{triviality}
For every $A\subseteq p(\Ucal)$, we have
\[
 \acl(M\cup A)\cap p(\Ucal)=A.
\]
In particular, the geometry of $p$ is strictly disintegrated (hence trivial, see Remark \ref{PDCT}).
\end{corollary}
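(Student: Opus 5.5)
The plan is to derive the corollary directly from Theorem~\ref{independence}, using the fact that in $\DCF_{0,2}$ algebraic closure is the field-theoretic algebraic closure of the generated differential field. As a shortcut, one can also cite Remark~\ref{PDCT}: Proposition~\ref{minimality} gives $U(p)=1$, and its proof establishes condition (P), since all ordered tuples of distinct realizations have the same type over $M$. The equivalence (P)$\Leftrightarrow$(C) in the remark then yields the displayed equality, and (C)$\Rightarrow$(T) gives triviality. I will nevertheless give the direct argument, because it makes the role of independence transparent.

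By finite character of $\acl$, it suffices to treat finite $A=\{a_1,\ldots,a_n\}\subseteq p(\Ucal)$ with the $a_i$ pairwise distinct. The inclusion $A\subseteq\acl(M\cup A)\cap p(\Ucal)$ is trivial. For the converse, take $b\models p$ with $b\notin A$. Then $a_1,\ldots,a_n,b$ are pairwise distinct, and each satisfies $Dx=x\partial x$ and is $\partial$-differentially transcendental over $M$, since each realizes $p$.

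Theorem~\ref{independence}, applied with $K=M$, shows that the family $\{\partial^j a_i\}\cup\{\partial^j b\}$ is algebraically independent over $M$. By Lemma~\ref{irred}, each of $M\langle a_i\rangle_\Delta$ and $M\langle b\rangle_\Delta$ is generated as a field over $M$ by the $\partial$-jet alone: $D$ of any jet element is a polynomial in jet elements, so $M(\partial^j a_i:j\geqslant0)$ is closed under both derivations. Hence $M\langle A\rangle_\Delta=M(\partial^ja_i: i,j)$. In particular, $b$ is transcendental over $M\langle A\rangle_\Delta$, and therefore $b\notin\acl(M\cup A)$.

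The only point that needs care is this identification of $\acl(M\cup A)$ with the field-theoretic algebraic closure of $M\langle A\rangle_\Delta$, together with the observation that this differential field is just the field generated by the jets. Both are already available from the standard facts recalled in Section~\ref{sec2} and from Lemma~\ref{irred}. The ``in particular'' clause is then exactly condition (C) of Remark~\ref{PDCT}, which is equivalent to strict disintegration (D) and implies geometric triviality (T).
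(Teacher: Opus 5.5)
Your proposal is correct and takes essentially the same route as the paper: reduce by finite character to pairwise distinct $a_1,\ldots,a_n$, then apply Theorem~\ref{independence} with $K=M$ to see that any $b\notin A$ realizing $p$ is transcendental over $M\langle a_1,\ldots,a_n\rangle_\Delta=M(\partial^j a_i:i,j)$, and conclude using the description of $\acl$ as field-theoretic algebraic closure. Your alternative shortcut through (P)$\Leftrightarrow$(C) in Remark~\ref{PDCT}, using the tuple-homogeneity established in the proof of Proposition~\ref{minimality}, is also valid.
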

\begin{proof}
The inclusion $A\subseteq\acl(M\cup A)\cap p(\Ucal)$ is clear.
Conversely, let $b\in\acl(M\cup A)\cap p(\Ucal)$.
By finite character, choose distinct $a_1,\ldots,a_n\in A$
such that $b\in\acl(M\cup \{a_1,\ldots, a_n\})$.
Suppose that $b\notin A$. By Theorem~\ref{independence},
the $\partial$-jets of $a_1,\ldots,a_n,b$ are jointly
algebraically independent over $M$. In particular, $b$
is transcendental over
\[
F:=M(\partial^j a_i:1\leqslant i\leqslant n,\ j\geqslant0)
  =M\langle a_1,\ldots,a_n\rangle_\Delta.
\]
This contradicts
\[
b\in\acl(M\cup \{a_1,\ldots, a_n\})=F^{\mathrm{alg}}\cap\Ucal.
\]
Thus $b\in A$.
\end{proof}

We move now to Morley rank computations.
\begin{lemma}\label{proper-finite}
Every proper Kolchin-closed subset of $X$, over any differential
parameter field, has finite Morley rank.
\end{lemma}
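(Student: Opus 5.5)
Let $W\subsetneq X$ be Kolchin-closed, defined over some small differential field $L$. Enlarging $L$ does not change Morley rank (rank is computed in the monster model), so we may take $L$ algebraically closed and containing $M$. The plan is to show that every point of $W$ is finite-dimensional over $L$ with a uniform bound. Lemma~\ref{finite-dimensional-bound} then gives $\RM(W)<\omega$.

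By Noetherianity, $W$ is a finite union of $L$-irreducible components $W_1,\dots,W_k$, each a proper Kolchin-closed subset of $X$. It suffices to bound each $W_l$. Fix $l$ and choose a differential polynomial $f\in L\{Y\}_\Delta$ vanishing on $W_l$ but not on $X$. Such an $f$ exists because $W_l\neq X$ and, by Lemma~\ref{irred}, $X$ is irreducible over $L$.

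Next we reduce $f$ modulo the ideal $I$ generated by $DY-Y\partial Y$. By Lemma~\ref{irred}, the class of $f$ in $L\{Y\}_\Delta/I$ corresponds to a nonzero polynomial $g(y_0,\dots,y_N)\in L[y_0,y_1,\dots]$. Concretely, $g$ is obtained by repeatedly rewriting $D\partial^jY$ as $\partial^j(Y\partial Y)$. Every $a\in X$ satisfies the relations defining $I$, so every $a\in W_l$ satisfies
\[
 g(a,\partial a,\dots,\partial^N a)=0 .
\]
Since $g\neq0$, choose $N$ minimal with $g$ genuinely involving $y_N$. Then $\partial^N a$ is algebraic over $L(a,\dots,\partial^{N-1}a)$, provided the leading coefficient (the initial, or possibly the separant) of $g$ in $y_N$ does not vanish at $a$.

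Applying $\partial$ to the relation and using the separant $S=\partial g/\partial y_N$, we get $S\cdot\partial^{N+1}a\in L(a,\dots,\partial^N a)$. Inductively, every $\partial^j a$ lies in $L(a,\dots,\partial^N a)$ whenever $S(a,\dots,\partial^Na)\neq0$. Since $Da=a\partial a$, the field $L\langle a\rangle_\Delta$ equals $L(a,\partial a,\partial^2a,\dots)$, because $D$ of any $\partial^j a$ is a polynomial in the $\partial$-jets. Hence $\trdeg(L\langle a\rangle_\Delta/L)\leqslant N$ on the relatively open set of $W_l$ where $S\neq0$.

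On the closed complement, $a$ satisfies the nonzero relation $S=0$, which has lower rank (lower degree in $y_N$, or lower $N$) in the ordering of polynomials in the $y_j$. We therefore induct on this rank, in the style of Ritt, using the same procedure. The induction is well founded, and it splits $W_l$ into finitely many definable pieces, each with a uniform transcendence-degree bound. By Lemma~\ref{finite-dimensional-bound} applied to each piece, together with finite additivity of Morley rank over finite unions, $\RM(W_l)<\omega$, and hence $\RM(W)<\omega$.

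The main obstacle is this Ritt-style induction, which must handle the locus where the separant (or initial) vanishes. I would first try to phrase it cleanly by induction on the pair ($N$, degree in $y_N$) for a nonzero polynomial $g\in L[y_0,\dots,y_N]$ vanishing on a definable subset $Z\subseteq X$. The statement to prove is that every such $Z$ is a finite union of definable sets on which $\trdeg(L\langle a\rangle_\Delta/L)\leqslant N$. The degenerate case $N=0$ needs a separate check: there $g(a)=0$ forces $a$ algebraic over $L$, so the piece is finite.
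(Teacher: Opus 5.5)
Your argument is correct, but it handles the key step differently from the paper.

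The common skeleton is the same in both proofs:
\begin{enumerate}
\item use Lemma~\ref{irred} to get a nonzero polynomial in the $\partial$-jets that vanishes on the closed set;
\item differentiate an algebraic relation to show that finitely many jets generate a $\partial$-closed field;
\item use $Da=a\partial a$ to get $D$-closure;
\item conclude with Lemma~\ref{finite-dimensional-bound}.
\end{enumerate}

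The difference is the separant. You fix one polynomial $g$, so you must handle the locus where $\partial g/\partial y_N$ vanishes. That is what forces your Ritt-style induction on $(N,\deg_{y_N}g)$ and the partition into definable pieces.

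The paper avoids this by choosing the relation pointwise. For $a\in Z$ it takes the least $h$ such that $a,\partial a,\ldots,\partial^h a$ are algebraically dependent over $K$; then $h\leqslant r$ because $Q\neq0$ vanishes at the first $r+1$ jets. It then differentiates the \emph{minimal polynomial} of $\partial^h a$ over $K(a,\ldots,\partial^{h-1}a)$. In characteristic zero the derivative of that polynomial cannot vanish at $\partial^h a$. This gives $\partial^{h+1}a\in K(a,\ldots,\partial^h a)$ and hence $K\langle a\rangle_\Delta=K(a,\ldots,\partial^h a)$ directly, with no exceptional locus and no induction. The obstacle you identified only arises because the same $g$ is used at every point.

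Lemma~\ref{finite-dimensional-bound} needs only a uniform pointwise bound on $\trdeg$. So several of your steps are unnecessary: the decomposition into irreducible components, the final appeal to additivity of Morley rank over finitely many pieces, and the use of irreducibility of $X$ to find $f$. Your induction already shows $\trdeg(L\langle a\rangle_\Delta/L)\leqslant N$ at every point, so the lemma applies to $W$ at once.

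What your route buys is extra uniformity, in the spirit of characteristic sets. You get a finite definable partition, and on each part the higher jets are given by a fixed rational expression in finitely many lower ones. That is more than this lemma requires.
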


\begin{proof}
Let $Z\subsetneq X$ be Kolchin closed, and choose a small
algebraically closed differential field $K$ containing
its parameters of definition. We may assume that $Z$ is nonempty.

By Lemma~\ref{irred}, there is $r\geqslant 0$ and $Q\in K[X_0,\ldots,X_r]\setminus \{0\}$ such that
\[
\forall x\in Z \ Q(x,\partial x,\ldots,\partial^r x)=0\qquad \text{and} \qquad \exists x\in X\ Q(x,\partial x,\ldots,\partial^r x)\neq 0.
\]
By Lemma~\ref{finite-dimensional-bound}, it is enough to show the following
\begin{equation}\label{finite-transcendence-bound}
\forall a\in Z\qquad \trdeg(K\langle a\rangle_\Delta/K)\leqslant r.
\tag{$*$}
\end{equation}

Fix $a\in Z$ and write $a_j=\partial^j a$.
Let $h\leqslant r$ be least such that $a_0,\ldots,a_h$
are algebraically dependent over $K$. Then $a_h$ is
algebraic over $K(a_0,\ldots,a_{h-1})$, so $a_{h+1}\in K(a_0,\ldots,a_h)$.
Hence $K(a_0,\ldots,a_h)$ is closed under $\partial$.
Moreover, the Burgers equation gives
\[
Da_j=\partial^j(a_0a_1)\in K(a_0,\ldots,a_h),
\]
so $K(a_0,\ldots,a_h)$ is also closed under $D$. Therefore, we obtain
\[
K\langle a\rangle_\Delta=K(a_0,\ldots,a_h),
\qquad
\trdeg(K(a_0,\ldots,a_h)/K)=h\leqslant r,
\]
which gives (\ref{finite-transcendence-bound}).
\end{proof}

\begin{proposition}\label{upper-bound}
Any definable subset of $X$ either has
finite Morley rank or has complement of finite Morley rank. In particular, $\RM(X)\leqslant\omega$.
\end{proposition}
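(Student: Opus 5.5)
Let $Z\subseteq X$ be definable, say over a small algebraically closed differential field $K$ (which we may take to contain $M$). By quantifier elimination, $Z$ is a finite Boolean combination of Kolchin-closed sets $W_1,\ldots,W_k$ defined over $K$. Intersecting with $X$, each $W_l\cap X$ is either all of $X$ or a proper Kolchin-closed subset of $X$. By Lemma~\ref{proper-finite}, the latter has finite Morley rank.

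Let $W$ be the union of those $W_l\cap X$ which are proper subsets of $X$. Then $W$ is a finite union of sets of finite Morley rank, so $\RM(W)<\omega$. On $X\setminus W$, each atomic piece $W_l\cap X$ is either everywhere true (if $W_l\supseteq X$) or everywhere false. Hence $Z\cap(X\setminus W)$ is either empty or all of $X\setminus W$. In the first case $Z\subseteq W$, so $\RM(Z)<\omega$. In the second case $X\setminus Z\subseteq W$, so the complement has finite Morley rank. This proves the dichotomy.

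For the bound $\RM(X)\leqslant\omega$, we argue by contradiction. Suppose $\RM(X)\geqslant\omega+1$. Then, working in the monster model, there are infinitely many pairwise disjoint definable subsets of $X$, each of rank at least $\omega$; two suffice. Call them $Z_1$ and $Z_2$. By the dichotomy, each $Z_i$ has finite Morley rank or cofinite-rank complement. Since $\RM(Z_i)\geqslant\omega$, the first option is impossible, so $\RM(X\setminus Z_i)<\omega$ for $i=1,2$. But $Z_2\subseteq X\setminus Z_1$, so $\RM(Z_2)<\omega$, which is a contradiction.

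The dichotomy must be applied over parameter sets larger than $M$. This is harmless, because Lemma~\ref{proper-finite} allows any parameter field. The only delicate point is the first step: reducing an arbitrary definable set to a Boolean combination of Kolchin-closed sets intersected with $X$, while keeping track that these are proper closed subsets of $X$, not merely closed sets.
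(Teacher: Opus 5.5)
Your proof is correct. Your derivation of $\RM(X)\leqslant\omega$ from two disjoint definable subsets of rank $\geqslant\omega$ is exactly the paper's.

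For the dichotomy you take a slightly different and more elementary route. The paper uses the Kolchin irreducibility of $X$ (Lemma~\ref{irred}) to get that $V$ or $X\setminus V$ is Kolchin-dense in $X$. It then appeals to quantifier elimination, implicitly using that a dense constructible subset of an irreducible space has nonempty interior, to conclude that the other side lies in a proper Kolchin-closed subset of $X$.

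You instead work directly with the atoms $W_l$ of the Boolean combination. You collect the proper traces $W_l\cap X$ into a set $W$ of finite Morley rank and observe that every atom has constant truth value on $X\setminus W$. This bypasses both the explicit use of irreducibility and the density-to-interior step; your atom argument is in effect a proof of that step in this setting. It also never needs $W$ to be a proper subset of $X$, only that it has finite rank. Lemma~\ref{irred} enters your argument only through Lemma~\ref{proper-finite}.

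The paper's version is shorter to state and shows the geometric picture: one of $V$, $X\setminus V$ is Kolchin-generic in $X$. Yours is fully self-contained.
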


\begin{proof}
Let $V$ be a definable subset of $X$. By Lemma \ref{irred}, we have $\overline{V}=X$ or $\overline{X\setminus V}=X$, where $\overline{T}$ is the Kolchin closure of $T$. Therefore, by quantifier elimination, we get that $V$ has non-empty Kolchin interior or $X\setminus V$ has non-empty Kolchin interior. Assume that $V$ has non-empty Kolchin interior. Then, $X\setminus V$ is contained in a proper Kolchin closed subset of $X$, so $\RM(X\setminus V)<\omega$ by Lemma \ref{proper-finite}.

For the ``In particular'' part, if $\RM(X)\geqslant\omega+1$, then there are two disjoint
definable subsets of $X$ of Morley rank at least $\omega$.
By the preceding argument, the complement of the first
has finite Morley rank, contradicting the rank of the
second. Hence $\RM(X)\leqslant\omega$.
\end{proof}
For $n\geqslant1$, let
\[
 T_n=\{(c_0,\ldots,c_n)\in\C^{n+1}\mid c_n\neq0\}
\]
and define (recall the fixed $s,t\in \Uu$ from the Introduction):
\begin{equation}\label{Yn}
 Y_n=\left\{x\mid \exists\bar c\in T_n,\quad
          s+tx=\sum_{i=0}^{n}c_i x^i\right\}.
          \notag
\end{equation}
\begin{proposition}\label{exceptional-rank}
For every $n\geqslant1$, we have
\[
Y_n\subseteq X\qquad \text{and}\qquad U(Y_n)=\RM(Y_n)=n+1.
\]
\end{proposition}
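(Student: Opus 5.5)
The plan has three parts: check $Y_n\subseteq X$ by implicit differentiation, bound $\RM(Y_n)$ above via Lemma~\ref{finite-dimensional-bound}, and bound $U(Y_n)$ below via a finite-to-one definable map from $Y_n$ onto the constants $T_n$. Since $U\leqslant\RM$, the two bounds give $U(Y_n)=\RM(Y_n)=n+1$.

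\emph{Inclusion $Y_n\subseteq X$.} Fix $x\in Y_n$ with witness $\bar c\in T_n$, and put $P(Z)=\sum_{i=0}^n c_iZ^i$, so that $P(x)=s+tx$. The $c_i$ are total constants, and $\partial s=1$, $\partial t=0$, $Ds=0$, $Dt=1$. Applying $\partial$ to the relation gives
\[
(P'(x)-t)\,\partial x=1,
\]
so $P'(x)-t\neq0$ and $\partial x=1/(P'(x)-t)$. Applying $D$ gives $(P'(x)-t)\,Dx=x$, hence $Dx=x/(P'(x)-t)=x\,\partial x$. Thus $x\in X$.

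\emph{Upper bound.} For $x\in Y_n$ with witness $\bar c$, the field $M(\bar c,x)$ is closed under both derivations. Indeed $\bar c$ are constants, $s,t\in M$, and $\partial x$, $Dx$ are rational in $x,\bar c$ over $M$. Moreover $x$ is algebraic over $M(\bar c)$, since $P(Z)-s-tZ$ is a nonzero polynomial because of $s$. So $\trdeg(M\langle x\rangle_\Delta/M)\leqslant\trdeg(M(\bar c)/M)\leqslant n+1$. Lemma~\ref{finite-dimensional-bound} then yields $\RM(Y_n)\leqslant n+1$, hence also $U(Y_n)\leqslant n+1$.

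\emph{Lower bound.} I first show that the witness $\bar c$ is uniquely determined by $x$. If $\bar c'$ is another witness, then $\sum_i(c_i-c'_i)x^i=0$. If $\bar c\neq\bar c'$, this makes $x$ algebraic over $\C$, so $x\in\C$ because $\C$ is relatively algebraically closed. That forces $\partial x=0$, contradicting $(P'(x)-t)\partial x=1$.

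Hence there is an $M$-definable map $f:Y_n\to T_n$, $x\mapsto\bar c$, with $\bar c\in\operatorname{dcl}(M x)$. It is surjective: for any $\bar c\in T_n$, take $x$ to be a root of $P(Z)-tZ-s$, which is nonconstant; the relation holds, so $x\in Y_n$. It has finite fibres, since $x\in\acl(M\bar c)$.

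Therefore $U(x/M)=U(\bar c/M)$ whenever $f(x)=\bar c$. Now choose $\bar c\in T_n$ with $c_0,\ldots,c_n$ algebraically independent over $M$; this is possible by saturation, as $\C$ is a large algebraically closed field. Because $\C$ is strongly minimal with $\acl$ given by field algebraic closure, $U(\bar c/M)=n+1$. Any preimage $x$ of this $\bar c$ then gives $U(Y_n)\geqslant n+1$.

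The main point needing care is this last step. It needs uniqueness of the witness, so that $f$ is a definable function and $\bar c\in\operatorname{dcl}(Mx)$ rather than merely $\acl$, and it needs the standard fact that $U$-rank is preserved under interalgebraicity. Everything else is direct computation.
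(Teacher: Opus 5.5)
Your proof is correct, but it is organized differently from the paper's. The paper uses one mechanism for both ranks. After the same implicit-differentiation check and the same uniqueness-of-witness argument, it treats $f\colon Y_n\to T_n$ as a definable finite-to-one surjection. It computes $\RM(T_n)=U(T_n)=n+1$ from the fact that $\C$ is stably embedded with the induced structure of a pure algebraically closed field, and then transfers both ranks along $f$ at once.

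You instead sandwich the ranks:
\begin{enumerate}
\item Your upper bound $\RM(Y_n)\leqslant n+1$ comes from the transcendence-degree estimate $\trdeg(M\langle x\rangle_\Delta/M)\leqslant n+1$ together with Lemma~\ref{finite-dimensional-bound}.
\item Your lower bound comes from a single point $x$ lying over a $\bar c$ that is generic in $\C^{n+1}$ over $M$. It uses only invariance of $U$-rank under interalgebraicity, and then $U\leqslant\RM$ closes the gap.
\end{enumerate}

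Your route has some advantages. It never needs Morley rank to be preserved by finite-to-one definable maps. It only needs the $\acl$-dimension of a tuple in the strongly minimal set $\C$, not the full description of the induced structure on $\C$. It also needs surjectivity of $f$ only at a generic $\bar c$. The price is that your upper bound rests on McGrail's upper bound, through Lemma~\ref{finite-dimensional-bound}. The paper's proof of this proposition does not need that, although the paper relies on it elsewhere anyway.

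One small detail is asserted without justification. For $n=1$, the polynomial $P(Z)-tZ-s$ is nonconstant only because $c_1\neq t$, which holds since $Dt=1\neq 0=Dc_1$; the paper makes this explicit. At your generic $\bar c$ this is automatic, since $c_1$ is transcendental over $M\ni t$.
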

\begin{proof}
Let $x\in Y_n$, and choose $\bar c=(c_0,\ldots,c_n)\in T_n$ such that
\[
s+tx=P(x),
\qquad
P(T)=\sum_{i=0}^n c_iT^i\in \C[T].
\]
Differentiation gives
\[
(P'(x)-t)\partial x=1,
\qquad
(P'(x)-t)Dx=x.
\]
Thus we obtain $\partial (x)\neq0$ and $D(x)=x\partial (x)$, so $Y_n\subseteq X$.

Since $\C$ is algebraically closed and $x\notin\C$,
the element $x$ is transcendental over $\C$.
Consequently,
\[
\forall \bar c,\bar d\in T_n\qquad \qquad \left(
\sum_{i=0}^n c_i x^i
=s+tx
=\sum_{i=0}^n d_i x^i
\qquad \Longrightarrow\qquad \bar c=\bar d\right).
\]
Hence the relation defining
$Y_n$ determines a definable function
\[
f:Y_n\longrightarrow T_n,\qquad x\longmapsto\bar c.
\]

For every $\bar c\in T_n$, the polynomial
$P(T)-tT-s$ has degree $n$. For $n=1$, this uses
$c_1-t\neq0$, which follows from $Dt=1$.
Since $\Ucal$ is algebraically closed, this polynomial
has a root. Thus $f$ is surjective and has finite fibres.

The field $\C$ is stably embedded with its induced
structure that of a pure algebraically closed field.
Therefore
\[
\RM(T_n)=U(T_n)=n+1.
\]
Since $f$ is a finite-to-one definable surjection,
\[
\RM(Y_n)=U(Y_n)=n+1,
\]
which we needed to show.
\end{proof}

\begin{proof}[Completion of the proof of Theorem~\ref{main}]
Propositions~\ref{upper-bound} and~\ref{exceptional-rank} imply
$\RM(X)=\omega$. The finite-rank-complement assertion in
Proposition~\ref{upper-bound} also gives $\MD(X)=1$.

If $\varphi(x)\in p$, then $X\cap\varphi(\Ucal)$ contains a
Kolchin-open subset of $X$ over $M$. Its complement in $X$ is contained
in a proper Kolchin-closed subset and has finite Morley rank.
Therefore $\RM(X\cap\varphi(\Ucal))=\omega$. Since $X\in p$,
this proves $\RM(p)=\omega$.

Since for each $n\geqslant1$, Proposition \ref{exceptional-rank} gives:
\[
n+1=U(Y_n)\leqslant U(X)\leqslant\RM(X)=\omega,
\]
we also have $U(X)=\omega$. The remaining assertions are Proposition~\ref{minimality} and
Corollary~\ref{triviality}.
\end{proof}

\begin{remark}
We collect here some final observations.
\begin{enumerate}
  \item For $a\models p$, we have the following
\[
 M\bigl(D^i\partial^j a:i+j\leqslant r\bigr)
   =M(a,\partial a,\ldots,\partial^r a).
\]
Consequently its Kolchin polynomial is $r+1$, the differential type is
one, and the typical differential dimension is one.

  \item Each $Y_n$ from the proof of Proposition \ref{exceptional-rank} is disjoint from $p(\Ucal)$. Indeed, $Y_n$ is
$M$-definable, so if it contained a realization of $p$,
the formula defining $Y_n$ would belong to $p$. This would
imply
\[
\RM(p)\leqslant\RM(Y_n)=n+1,
\]
contrary to $\RM(p)=\omega$.

  \item The type $p$ is nonisolated. Indeed, if it were isolated, its isolating
set would equal $p(\Ucal)$ and would be strongly minimal by
Proposition~\ref{minimality}. Its Morley rank would then be one,
contrary to Theorem~\ref{main}.
\end{enumerate}

\end{remark}
I have not found an example of a definable set $V$ in
$\mathrm{DCF}_{0,2}$ such that
\[
U(V)=1
\qquad\text{and}\qquad
\mathrm{RM}(V)=\omega.
\]
Actually, one can show that for our set $X$ from the introduction and any definable $V\subseteq X$, we have
\[
U(V)<\omega\qquad\iff\qquad\RM(V)<\omega.
\]
This leads to the following.
\begin{question}
Does every definable set of finite
$U$-rank in $\DCF_{0,m}$ have finite Morley rank?
\end{question}

\end{document}